\def\A{\mathrm{A}} \def\AGL{\mathrm{AGL}}   \def\Aut{\mathrm{Aut}}
 \def\calB{\mathcal{B}}     \def\Cos{\mathsf{Cos}}
\def\D{\mathrm{D}}
 \def\Hol{\mathrm{Hol}}
\def\M{\mathrm{M}} 
\def\Nor{\mathbf{N}}
  \def\PGL{\mathrm{PGL}}    \def\PSL{\mathrm{PSL}}  \def\PSiU{\mathrm{P\Sigma U}}  \def\PSp{\mathrm{PSp}} \def\PSU{\mathrm{PSU}}
\def\Q{\mathrm{Q}}
  \def\Ree{\mathrm{Ree}} 
  \def\Soc{\mathrm{Soc}}     \def\Sy{\mathrm{S}} \def\Sym{\mathrm{Sym}} \def\Sz{\mathrm{Sz}}
\def\Z{\mathrm{C}} 
\newtheorem{theorem}{Theorem}[section]
\newtheorem{lemma}[theorem]{Lemma}
\theoremstyle{definition}
\newtheorem{definition}[theorem]{Definition}
\newtheorem{example}[theorem]{Example}
\newtheorem{problem}[theorem]{Problem}
\begin{document}

\title{On Isomorphisms of Vertex-transitive Graphs}

\thanks{This work was supported by NSFC grant (11501011, 11501188) and Aid Program for Science and Technology Innovative Research Team in Higher Educational Institutions of Hunan Province.}

\author[Chen]{Jing Chen}
\address{(Chen) School of Mathematics\\
Hunan First Normal University\\
Changsha 410205\\
and Center for Discrete Mathematics and Theoretical Computer Science\\
Fuzhou University\\
Fuzhou 350003\\
P. R. China}
\email{chenjing827@126.com}

\author[Xia]{Binzhou Xia}
\address{(Xia) Beijing International Center for Mathematical Research\\
Peking University\\
Beijing, 100871\\
P. R. China}
\email{binzhouxia@pku.edu.cn}


\maketitle

\begin{abstract}
The isomorphism problem of Cayley graphs has been well studied in the literature, such as characterizations of CI (DCI)-graphs and CI (DCI)-groups. In this paper, we generalize these to vertex-transitive graphs and establish parallel results. Some interesting vertex-transitive graphs are given, including a first example of connected symmetric non-Cayley non-GI-graph. Also, we initiate the study for GI and DGI-groups, defined analogously to the concept of CI and DCI-groups.
\end{abstract}

\noindent MSC2010: 05C60, 05E18\\
{\bf Keywords:} coset graph; GI-graphs; isomorphisms; vertex-transitive graphs

\section{Introduction}

Throughout this paper, by (di)graph we mean finite digraph without loops or multiedges, and all groups are assumed to be finite. Deciding whether two graphs are isomorphic is fundamental for the study of graphs, especially for determining isomorphism classes of graphs.
A graph is said to be $G$-vertex-transitive if the subgroup $G$ of its full automorphism group acts transitively on the vertex set.
One would expect to determine the isomorphisms between two $G$-vertex-transitive graphs by the information of the group $G$.
For Cayley graphs, such an approach was initiated by a conjecture of \'{A}d\'{a}m in 1967 \cite{Adam}, and has been extensively studied over the past decades,
see for example \cite{Alspach,Babai,Dobson-95,Godsil,Kovacs-M,Muzychuck,Palfy,Somlai,Spiga} and more references listed in the survey \cite{CI-survey}.
Since a large number of vertex-transitive graphs are not Cayley graphs, it is natural to extend the study from Cayley graphs to vertex-transitive graphs.
The isomorphism problem for metacirculants (not necessarily Cayley graphs) has been considered by Dobson \cite{Dobson}.

To be precise, we need the concept of coset graphs.
Let $\Gamma=(V,E)$ be a $G$-vertex-transitive graph, $\alpha$ be a vertex of $\Gamma$ and $S$ be the set of elements of $G$ which maps $\alpha$ to its (out) neighbors.
Then $\Gamma$ is uniquely determined by the triple $(G,G_\alpha,S)$ in the following sense:
writing $H=G_\alpha$ and identifying the vertex set $V$ with the set $[G{:}H]$ of right cosets of $H$ in $G$,
the action of $G$ on $V$ is equivalent to the action of $G$ on $[G{:}H]$ by right multiplication.
In particular, if $\alpha$ is identified with $H\in[G{:}H]$ then
the neighborhood $\Gamma(\alpha)$ consists of $Hg$ with $g\in S$, and moreover, $Hx\sim Hy$ if and only if $yx^{-1}\in HSH$.
This defines a \emph{coset graph} representation of $\Gamma$, denoted by $\Cos(G,H,HSH)$.
Note that $H$ is core-free in $G$ (that is, $H$ does not contain any nontrivial normal subgroup of $G$) since $G$ is a transitive permutation group on $V$, and $S\subseteq G\setminus H$ since $\Gamma$ has no loops.
Clearly, for any automorphism $\tau\in\Aut(G)$ we have $\Cos(G,H,HSH)\cong\Cos(G,H^\tau,H^\tau S^\tau H^\tau)$.

\begin{definition}
The $G$-vertex-transitive graph $\Gamma=\Cos(G,H,HSH)$ is called a \emph{GI-graph} (`GI' stands for `Group automorphism inducing Isomorphism')
of $G$ if for any graph $\Sigma=\Cos(G,H,HTH)$ with $T\subseteq G\setminus H$ and $\Gamma\cong\Sigma$, there exists
$\tau\in\Aut(G)$ such that $H^\tau=H$ and $HS^\tau H=HTH$.
A group $G$ is called a \emph{DGI-group} (`D' emphasizes that our graph may be Directed) if each $G$-vertex-transitive graph is a GI-graph of $G$.
A group $G$ is called a \emph{GI-group} if each undirected $G$-vertex-transitive graph is a GI-graph of $G$.
\end{definition}

Note that $\Cos(G,1,S)$ is a Cayley graph of $G$, and the GI-graphs of $G$ with $H=1$ are exactly the so called \emph{CI-graphs} of $G$. If each Cayley graph of $G$ is a CI-graph of $G$, then $G$ is called a \emph{DCI-group}. If each undirected Cayley graph of $G$ is a CI-graph of $G$, then $G$ is called a \emph{CI-group}. Clearly, a DGI-group is necessarily a GI-group, and a DGI-group (GI-group) is necessarily a DCI-group (CI-group). A small list of candidates for DCI and CI-groups has been obtained, through the effort of many mathematicians, see~\cite[Theorem~8.7]{CI-survey} and \cite[Corollary~1.5]{LPX1}. However, determining which groups in the list are indeed DCI or CI-groups is not easy and largely open. As being DGI-groups (GI-groups) is more restrictive than being DCI-groups (CI-groups), the explicit list of DGI-groups (GI-groups) would be smaller than that of DCI-groups (CI-groups). Thus we propose the problem:

\begin{problem}\label{prob1}
Classify the finite DGI-groups (GI-groups).
\end{problem}

In the literature, a crucial step to solve a conjecture of Babai and Frankel \cite{Babai-F}
stating that CI-groups are solvable was to determine whether there exists a non-CI-Cayley graph of $\A_5$.
After 20 years since Babai-Frankel conjecture was posed, a non-CI-Cayley graph of $\A_5$ of valency $29$ was constructed by Li \cite{CI-soluble}, thus completing the proof of the conjecture. Although some other non-CI-Cayley graphs of $\A_5$ was later constructed in \cite{Conder,XX}, Li's graph is the only known connected symmetric non-CI-graph of $\A_5$ yet. Here
a graph $\Gamma$ is called \emph{$G$-symmetric} for some $G\leqslant\Aut(\Gamma)$ if $G$ acts transitively on the arc set of $\Gamma$, and $\Gamma$ is simply called \emph{symmetric} if $\Gamma$ is $\Aut(\Gamma)$-symmetric. In general, constructing connected symmetric non-GI-graphs is not easy. Due to the significance of non-CI-Cayley graphs of $\A_5$, one would ask:

\begin{problem}\label{prob2}
Does there exist a connected symmetric non-GI-graph of $\A_5$ other than Li's?
\end{problem}

The layout of this paper is as follows. After this introduction, we give the criterion for GI-graph in Section~\ref{sec5}, which enables us to construct GI and non-GI-graphs, respectively, in Section~\ref{sec2}. In particular, we prove the theorem below by Example~\ref{exam1}.

\begin{theorem}\label{thm3}
There exists a connected symmetric non-Cayley non-GI-graph of order $40$ and valency $12$.
\end{theorem}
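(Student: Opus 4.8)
The plan is to exhibit a specific graph and verify the three required properties: symmetric, non-Cayley, and non-GI. Given the order 40 and valency 12, the natural candidate group to work with is $G=\A_5\times\ZZ_2$ or a group of order $120$ acting on $40$ points, since $40 = |G|/|H|$ forces $|H|=3$ when $|G|=120$; more likely the construction uses $G=\Sy_5$ (of order $120$) with a point stabilizer $H$ of order $3$, so that $[G{:}H]$ has size $40$. First I would pin down the coset representation $\Cos(G,H,HSH)$ in Example~\ref{exam1}: choose $G$, a core-free subgroup $H$ with $|G{:}H|=40$, and a symmetric connection set $S$ (closed under inversion, to force an undirected graph) with $|HSH|/|H|=12$ so that the valency is $12$. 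The symmetry (arc-transitivity) should follow by checking that $G$ acts transitively on arcs, equivalently that $H$ acts transitively on the neighborhood $\Gamma(\alpha)$, which is a finite computation with the chosen $S$.

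Second I would establish that the graph is non-Cayley. The cleanest route is to show that $\Aut(\Gamma)$ contains no regular subgroup, or more practically that the specific vertex-transitive group realizing the graph has no subgroup acting regularly on the $40$ vertices; this is where one typically invokes the structure of $\Aut(\Gamma)$ (computed or bounded) and the classification of transitive groups of degree $40$. One standard tactic is to show $\Soc(\Aut(\Gamma))$ or $\Aut(\Gamma)$ itself is primitive or almost simple in a way that precludes a regular subgroup; for $\A_5$-related actions on $40$ points this is believable since $\A_5$ has no subgroup of index dividing the relevant regular-subgroup order.

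Third, and this is the crux, I would prove the graph is non-GI using the criterion for GI-graphs promised in Section~\ref{sec5}. Concretely I would produce a second connection set $T\subseteq G\setminus H$ with $\Cos(G,H,HTH)\cong\Cos(G,H,HSH)$ yet for which no $\tau\in\Aut(G)$ satisfies simultaneously $H^\tau=H$ and $HS^\tau H=HTH$. The isomorphism $\Gamma\cong\Sigma$ would be exhibited by an explicit graph isomorphism that does \emph{not} arise from a group automorphism of $G$ fixing $H$; the failure of GI then amounts to checking that the finitely many automorphisms in $\Aut(G)$ normalizing $H$ (a computable set) fail to carry $HSH$ to $HTH$. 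I expect the main obstacle to be this last verification: one must control the full automorphism group $\Aut(G)$ and its action on the double cosets, and rule out every candidate $\tau$, which is the genuinely nonroutine part.

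In practice I anticipate these verifications being carried out in \magma on the explicit group $G$ and connection sets, with the arc-transitivity, non-Cayley property, and inequivalence of the double cosets $HSH$ and $HTH$ under $\Aut(G)$-automorphisms fixing $H$ all reduced to finite group-theoretic computations; the conceptual content lies in choosing $G$, $H$, $S$, and $T$ so that all three properties hold at once, which Example~\ref{exam1} is designed to do.
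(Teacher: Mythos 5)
There is a genuine gap: your proposal is a verification strategy, not a construction, and the one concrete guess you do make is impossible. For an existence theorem proved by exhibiting an example, the choice of the example \emph{is} the proof, and you defer exactly that ("the conceptual content lies in choosing $G$, $H$, $S$, and $T$ ... which Example~\ref{exam1} is designed to do") to an unspecified computation. Worse, the guessed parameters are internally inconsistent: if $|G|=120$ and $|H|=3$, then a $G$-symmetric coset graph $\Cos(G,H,HgH)$ has valency $|H|/|H^g\cap H|\leqslant 3$ by Lemma~\ref{lem1}(d), so valency $12$ is unattainable; your plan to certify arc-transitivity by checking that $H$ is transitive on the $12$ neighbours therefore cannot be executed for any choice of $S$. (Symmetry would have to come from a strictly larger automorphism group, at which point your verification route collapses and you must determine $\Aut(\Gamma)$ anyway.)

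The paper's example has a different shape, and in particular separates two roles that your plan conflates. The graph is $\Gamma=\Cos(G,G_\alpha,G_\alpha gG_\alpha)$ where $G=\PSiU_4(2)=\PSp_4(3){:}\Z_2$ is maximal in $X=\PSL_4(3)$ acting primitively on the $40$ one-dimensional subspaces of $\mathbb{F}_3^4$, with $g$ an involution giving valency $12$; this settles connectedness, symmetry, order and valency with $G$ as the arc-transitive group. But the non-GI property is proved \emph{not} with respect to $G$: it is proved with respect to a vertex-transitive (and not arc-transitive) subgroup $P\cong\Sy_6$. The key structural fact is that $G$ has two conjugacy classes of subgroups $\Sy_6$, fused in $X$, so representatives $P$ and $Q$ are permutation isomorphic on the vertex set; the heart of the argument is then pinning down $Y=\Aut(\Gamma)$ (via the maximal-subgroup classification of \cite{LPS1987}, concluding $Y=G$ or $Y=G\times\Z_2$) so that $P$ and $Q$ are not conjugate in $Y$, whence Theorem~\ref{thm2} gives non-GI. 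This avoids ever producing a second connection set $T$ or enumerating automorphisms of the group fixing $H$ -- the route you propose, which is the definitional (and equivalent) one but is precisely the part you leave unexecuted. Note also that the non-Cayley claim in the paper is dispatched against the determined $\Aut(\Gamma)$, not by the vague "no regular subgroup is believable" heuristic; without identifying $\Aut(\Gamma)$ you cannot complete that step either.
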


\noindent Then in Section~\ref{sec3} we establish some results on Problem~\ref{prob1}. The final section is devoted to Problem~\ref{prob2}, where it is shown that a connected $\A_5$-symmetric graph is necessarily GI if its full automorphism group is almost simple or vertex-primitive.

\section{Criterion for GI-graph}\label{sec5}

As mentioned in the introduction, $G$-vertex-transitive graphs can be represented
as coset graphs of $G$: for a core-free subgroup $H$ of $G$ and a subset $S\subseteq G\setminus H$, define
$\Gamma=\Cos(G,H,HSH)$ to be the graph with vertex set $V:=[G{:}H]$ such that $Hx\sim Hy$ if and only if $yx^{-1}\in HSH$.
For any $g\in G$, the right multiplication of $g$ on the cosets in $[G{:}H]$ gives an element of $\Sym(V)$, denoted by $\hat{g}$.
Moreover, denote $\hat{G}=\{\hat{g}\mid g\in G\}$.
(The reader should be aware that this also depends on the subgroup $H$ although the $\hat{\ }$ symbol does not indicate.)
We list here some basic facts concerning coset graphs.

\begin{lemma}\label{lem1}
Let $\Gamma=\Cos(G,H,HSH)$.
\begin{itemize}
\item[(a)] $\Gamma$ is undirected if and only if $HSH=HS^{-1}H$, where $S^{-1}:=\{s^{-1}\mid s\in S\}$.
\item[(b)] $G$ acts faithfully and transitively on the vertex set $[G{:}H]$ by right multiplication, so $\hat{G}$ is a subgroup of $\Aut(\Gamma)$ isomorphic to $G$.
\item[(c)] $\Gamma$ is connected if and only if $\langle H,S\rangle=G$.
\item[(d)] $\Gamma$ is $G$-symmetric if and only if $HSH=HgH$ for some $g\in G$. In this case, the valency of $\Gamma$ is equal to $|H|/|H^g\cap H|$.
\end{itemize}
\end{lemma}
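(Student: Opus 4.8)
The four parts all rest on one dictionary: the adjacency rule $Hx\sim Hy\iff yx^{-1}\in HSH$, which is well defined on cosets precisely because $HSH$ is a union of double cosets. So the plan is to unwind each graph-theoretic condition into a statement about the set $HSH$ and read off the answer. For (a), reversing an arc replaces the condition $yx^{-1}\in HSH$ by $xy^{-1}=(yx^{-1})^{-1}\in HSH$; letting $g=yx^{-1}$ run over all of $G$ (take $x=1$), undirectedness says exactly that $g\in HSH\iff g^{-1}\in HSH$, i.e.\ that $HSH$ is inverse-closed. Since $H$ is a subgroup we have $(HSH)^{-1}=HS^{-1}H$, so this is the asserted equality $HSH=HS^{-1}H$.

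For (b), transitivity is immediate: for $g=x^{-1}y$ the map $\hat g$ sends $Hx$ to $Hxg=Hy$. For faithfulness I would observe that $\hat g$ fixes every coset iff $xgx^{-1}\in H$ for all $x$, i.e.\ iff $g$ lies in the core $\bigcap_{x}x^{-1}Hx$ of $H$; core-freeness then forces $g=1$, so $g\mapsto\hat g$ is injective and $\hat G\cong G$. That $\hat G\leqslant\Aut(\Gamma)$ follows because $(yg)(xg)^{-1}=yx^{-1}$, so right multiplication preserves the adjacency condition.

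For (c), the crucial elementary remark is that for every coset $Hc$ and every $s\in S$ the cosets $Hc$ and $Hsc$ are adjacent, since $(sc)c^{-1}=s\in HSH$, while $Hc=Hhc$ for $h\in H$. Set $K=\langle H,S\rangle$. For one direction, any neighbour of a coset $Hc$ with $c\in K$ again has a representative in $K$, so the component of $H$ lies inside $\{Hk:k\in K\}$; if $K\neq G$ this is a proper nonempty vertex set and $\Gamma$ is disconnected. Conversely, if $K=G$ then every $x\in G$ is a word $g_m\cdots g_1$ in letters from $H\cup S$, and the sequence $H,\,Hg_1,\,Hg_2g_1,\,\dots,\,Hx$ is a walk whose consecutive terms are equal or adjacent, so $\Gamma$ is connected.

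For (d), since $\hat G$ is vertex-transitive with stabilizer $\hat H$ of the vertex $H$, $G$-symmetry is equivalent to $\hat H$ acting transitively on the out-neighbourhood of $H$, namely the right cosets contained in $HSH$. As the orbits of $\hat H$ acting by right multiplication on right cosets are exactly the double cosets, this transitivity holds iff $HSH$ is a single double coset $HgH$. For the valency I would count the right cosets $Hgh$ ($h\in H$) inside $HgH$: two coincide iff $h_1h_2^{-1}\in H\cap H^{g}$, giving $|H|/|H^{g}\cap H|$ cosets. Everything here is routine once the double-coset dictionary is in place; I expect the only genuine care to be needed in (c), where the directedness of $\Gamma$ must be accommodated (by passing to $S\cup S^{-1}$, which does not change $\langle H,S\rangle$) and the walk must be built by \emph{left} multiplication so that successive differences land in $HSH$, and in fixing the conjugation convention $H^{g}=g^{-1}Hg$ correctly in the valency count of (d).
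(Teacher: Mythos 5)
Your proof is correct. Note that the paper itself offers no proof of this lemma at all---it is stated as a list of ``basic facts'' about coset graphs---so there is no argument to compare against; what you have written is precisely the standard double-coset unwinding that the authors leave to the reader. All four parts check out: the inverse-closure characterization in (a) using $(HSH)^{-1}=HS^{-1}H$, the core-freeness argument for faithfulness in (b), the two-sided component argument in (c), and the identification of $\hat{H}$-orbits on right cosets with double cosets in (d), including the count $|H|/|H^g\cap H|$ of right cosets inside $HgH$. You were also right to flag the two places where care is genuinely needed: in (c) connectivity of a digraph refers to the underlying undirected graph, so the walk must be built from letters in $H\cup S\cup S^{-1}$ (harmless since $\langle H,S\rangle=\langle H,S\cup S^{-1}\rangle$), and in (d) the formula requires the convention $H^g=g^{-1}Hg$, matching $Hgh_1=Hgh_2\iff h_1h_2^{-1}\in g^{-1}Hg\cap H$.
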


Let $X$ and $Y$ be permutation groups on $\Omega$ and $\Delta$, respectively. We say that $X$ is \emph{permutation isomorphic} to $Y$ if there exist a bijection $\sigma:\Omega\rightarrow\Delta$ and a group isomorphism $\varphi:X\rightarrow Y$ such that $(\alpha^x)^\sigma=(\alpha^\sigma)^{\varphi(x)}$ for any $\alpha\in\Omega$ and $x\in X$. The following folklore theorem is an extension of the criterion for a Cayley graph to be a CI-graph \cite{Alspach-P,Babai} to those vertex-transitive graphs.
The proof goes along the same lines as that of the CI-graph criterion, so we omit it.

\begin{theorem}\label{thm2}
A $G$-vertex-transitive graph $\Gamma$ is a GI-graph of $G$ if and only if subgroups of $\Aut(\Gamma)$
which are permutation isomorphic to $\hat{G}$ are all conjugate in $\Aut(\Gamma)$.
\end{theorem}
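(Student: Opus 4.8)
The plan is to set up a dictionary between three kinds of objects living on the vertex set $V=[G{:}H]$: graph isomorphisms between coset graphs of the shape $\Cos(G,H,*)$, the subgroups of $\Aut(\Gamma)$ that are $\Sym(V)$-conjugate to $\hat{G}$, and the automorphisms $\tau\in\Aut(G)$ fixing $H$. The first observation is that, since $\hat{G}$ and a candidate subgroup $K$ act on the \emph{same} set $V$, the relation ``$K$ is permutation isomorphic to $\hat{G}$'' unwinds (from the definition with $\Omega=\Delta=V$) to $K=\sigma^{-1}\hat{G}\sigma$ for some $\sigma\in\Sym(V)$; that is, the subgroups of $\Aut(\Gamma)$ permutation isomorphic to $\hat{G}$ are exactly the $\Sym(V)$-conjugates of $\hat{G}$ that happen to lie in $\Aut(\Gamma)$. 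The second, and technically central, observation I would isolate as a short lemma: an element $\psi\in\Nor_{\Sym(V)}(\hat{G})$ fixing the base vertex $\alpha=H$ induces, via $\widehat{g^\tau}:=\psi^{-1}\hat{g}\psi$, an automorphism $\tau\in\Aut(G)$ with $H^\tau=H$, and this $\psi$ acts on vertices by $(Hg)^\psi=Hg^\tau$. Here $H^\tau=H$ comes from $\psi$ fixing $\alpha$ (so $\psi$ normalizes the stabilizer $\hat{H}$), and the vertex rule is just the identity $(\alpha^{\hat g})^\psi=(\alpha^\psi)^{\widehat{g^\tau}}$.

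For the forward direction, suppose all such subgroups are conjugate in $A:=\Aut(\Gamma)$, and let $\Sigma=\Cos(G,H,HTH)\cong\Gamma$ with $T\subseteq G\setminus H$. Since the $G$-action by right multiplication does not depend on the connection set, $\hat{G}$ sits inside both $\Aut(\Gamma)$ and $\Aut(\Sigma)$; hence any graph isomorphism $\theta\colon\Gamma\to\Sigma$ conjugates $\hat{G}\leqslant\Aut(\Sigma)$ into $A$, and $\theta\hat{G}\theta^{-1}$ is a $\Sym(V)$-conjugate of $\hat{G}$ lying in $A$. By hypothesis it is $A$-conjugate to $\hat{G}$; absorbing the conjugating element of $A$ into $\theta$ and post-composing with a suitable element of $\hat{G}$ to fix $\alpha$, I obtain a single isomorphism $\psi\colon\Sigma\to\Gamma$ that normalizes $\hat{G}$ and fixes $\alpha$. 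Feeding $\psi$ into the correspondence produces $\tau\in\Aut(G)$ with $H^\tau=H$ and $(Hg)^\psi=Hg^\tau$; comparing the neighborhoods $\Sigma(\alpha)^\psi=\Gamma(\alpha)$ then reads off $HT^\tau H=HSH$, and replacing $\tau$ by $\tau^{-1}$ gives the required $HS^{\tau}H=HTH$, so $\Gamma$ is a GI-graph.

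For the converse, suppose $\Gamma$ is a GI-graph and let $K\leqslant A$ be permutation isomorphic to $\hat{G}$, say $K=\sigma^{-1}\hat{G}\sigma$. Transporting the edge set of $\Gamma$ through $\sigma$ produces a graph $\Sigma$ for which $\sigma\colon\Sigma\to\Gamma$ is an isomorphism and $\hat{G}\leqslant\Aut(\Sigma)$; since $\hat{G}$ is transitive with point stabilizer $\hat{H}$ and $\Sigma$ has no loops, this $\Sigma$ is a coset graph $\Cos(G,H,HTH)$ with $T\subseteq G\setminus H$, and $\Sigma\cong\Gamma$. The GI-property yields $\tau\in\Aut(G)$ with $H^\tau=H$ and $HS^\tau H=HTH$; the correspondence then packages $\tau$ as a permutation $\hat\tau\in\Nor_{\Sym(V)}(\hat{G})$, $Hg\mapsto Hg^\tau$, which one checks is an isomorphism $\Gamma\to\Sigma$ precisely because $HS^\tau H=HTH$. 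Finally $\hat\tau\sigma\in\Aut(\Gamma)$ and $(\hat\tau\sigma)^{-1}\hat{G}(\hat\tau\sigma)=\sigma^{-1}\hat\tau^{-1}\hat{G}\hat\tau\sigma=\sigma^{-1}\hat{G}\sigma=K$, so $K$ is $A$-conjugate to $\hat{G}$, as wanted.

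I expect the main obstacle to be purely bookkeeping: keeping straight which maps go from $\Gamma$ to $\Sigma$ and which are honest automorphisms, and—most delicately—verifying that an abstract permutation normalizing $\hat{G}$ and fixing $\alpha$ is genuinely realized by a group automorphism $\tau$ fixing $H$ together with the clean vertex rule $(Hg)^\psi=Hg^\tau$, since it is this rule that converts the combinatorial condition $\Sigma(\alpha)^\psi=\Gamma(\alpha)$ into the algebraic condition on double cosets. Everything else is a matter of composing isomorphisms and tracking inverses.
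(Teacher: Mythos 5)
Your proposal is correct, and it is exactly the argument the paper has in mind: the paper omits the proof of Theorem~\ref{thm2}, stating only that it ``goes along the same lines as that of the CI-graph criterion,'' and your three-way dictionary between graph isomorphisms, $\Sym(V)$-conjugates of $\hat{G}$ inside $\Aut(\Gamma)$, and automorphisms of $G$ fixing $H$ is precisely the Babai--Alspach--Parsons argument transported from Cayley graphs to coset graphs. In particular, your key lemma (a permutation normalizing $\hat{G}$ and fixing the base vertex induces $\tau\in\Aut(G)$ with $H^\tau=H$ and $(Hg)^\psi=Hg^\tau$) is the correct coset-graph analogue of the regular-subgroup computation in the Cayley case, and both directions of your proof check out.
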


Based on Theorem~\ref{thm2}, we establish a sufficient condition on GI-graphs as follows.

\begin{theorem}\label{thm1}
Suppose that $G$ is a finite group of odd order, $p$ is the smallest prime divisor of $|G|$,
$\Gamma$ is a $G$-vertex-transitive graph and $A$ is the full automorphism group of $\Gamma$.
For any vertex $\alpha$ of $\Gamma$, if $\gcd(|G|,|A_\alpha|)=1$, then $\Gamma$ is a GI-graph of $G$.
In particular, if $\Gamma$ is connected of valency less than $p$, then $\Gamma$ is a GI-graph of $G$.
\end{theorem}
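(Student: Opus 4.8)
The plan is to invoke the criterion of Theorem~\ref{thm2}: it suffices to show that every subgroup $K$ of $A$ which is permutation isomorphic to $\hat{G}$ is conjugate to $\hat{G}$ in $A$. The first step is to observe that the coprimality hypothesis forces $\hat{G}$ to act regularly. Indeed, $\hat{G}_\alpha\leqslant A_\alpha$, so $|\hat{G}_\alpha|$ divides both $|G|=|\hat{G}|$ and $|A_\alpha|$; since these are coprime, $\hat{G}_\alpha=1$. Hence $\hat{G}$ is regular, the order $n$ of $\Gamma$ equals $|G|$, and $\Gamma$ is in fact a Cayley graph of $G$. Moreover $|A|=|A_\alpha|\cdot n$ with $n=|\hat{G}|$ coprime to $|A_\alpha|$, so $\hat{G}$ is a Hall $\pi$-subgroup of $A$, where $\pi=\pi(|G|)$ denotes the set of prime divisors of $|G|$.

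Next I would analyse an arbitrary $K\leqslant A$ that is permutation isomorphic to $\hat{G}$. Permutation isomorphism preserves regularity and order, so $K$ is regular with $|K|=|G|=n$, and therefore $K$ is also a Hall $\pi$-subgroup of $A$. Now both $\hat{G}$ and $K$ have odd order, so by the Feit--Thompson theorem they are solvable. Applying the conjugacy theorem for solvable Hall subgroups (a finite group in which one Hall $\pi$-subgroup is solvable has all its Hall $\pi$-subgroups conjugate), $K$ and $\hat{G}$ are conjugate in $A$, and Theorem~\ref{thm2} then yields that $\Gamma$ is a GI-graph of $G$. I expect this conjugacy step to be the crux: the remaining arguments are bookkeeping with orders, and the sole purpose of the odd-order hypothesis is to guarantee, via Feit--Thompson, the solvability that powers the Hall conjugacy theorem.

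For the final assertion, suppose $\Gamma$ is connected of valency $d<p$, and let $q$ be any prime dividing $\gcd(|G|,|A_\alpha|)$. Since $q\mid|G|$ we have $q\geqslant p>d$, and by Cauchy's theorem $A_\alpha$ contains an element $x$ of order $q$. As $x$ fixes $\alpha$, it permutes the $d$ out-neighbours and the $d$ in-neighbours of $\alpha$; since every $\langle x\rangle$-orbit has size $1$ or $q>d$, the element $x$ fixes all of these pointwise. The underlying undirected graph of $\Gamma$ is connected because $\langle H,S\rangle=G$ by Lemma~\ref{lem1}(c), so repeating this argument at each successive neighbour shows that $x$ fixes every vertex, forcing $x=1$, a contradiction. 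Hence $\gcd(|G|,|A_\alpha|)=1$ and the first part applies. The only mild subtlety here is the directed setting, which I would handle by passing to the underlying undirected graph and using that both in- and out-valency equal $d$, so that an element of order exceeding $d$ fixing a vertex must fix its entire neighbourhood.
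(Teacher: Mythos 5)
Your overall strategy is the same as the paper's (reduce to conjugacy of Hall $\pi$-subgroups of $A$ and apply Theorem~\ref{thm2}), and most of the steps are correct: coprimality does force $\hat{G}_\alpha=1$, so $\hat{G}$ and any subgroup of $A$ permutation isomorphic to it are regular Hall $\pi$-subgroups of $A$; and your argument for the final assertion (an element of prime order $q\geqslant p$ fixing $\alpha$ must fix its whole neighbourhood, hence by connectivity every vertex) is essentially the paper's argument, handled carefully in the directed setting. The genuine gap is at the step you yourself call the crux: the ``conjugacy theorem for solvable Hall subgroups'' that you invoke does not exist. It is false that a finite group with one solvable Hall $\pi$-subgroup has all its Hall $\pi$-subgroups conjugate. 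A standard counterexample is $\GL_3(2)\cong\PSL_2(7)$ with $\pi=\{2,3\}$: its Hall $\pi$-subgroups are the copies of $\Sy_4$ (of index $7$), they are certainly solvable, yet they fall into two conjugacy classes (stabilizers of points and stabilizers of hyperplanes of $\mathbb{F}_2^3$, the former having a fixed point on the $7$ points and the latter not). So Feit--Thompson solvability cannot deliver the conjugacy you need, and as written your proof of the main implication fails.

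What is true, and what the paper cites, is Gross's theorem \cite{Gross}: if $2\notin\pi$, then all Hall $\pi$-subgroups of an arbitrary finite group are conjugate. This is a deep result whose proof depends on the classification of finite simple groups; the hypothesis that $|G|$ is odd is used there in full force, not merely as a device to obtain solvability via Feit--Thompson. Your diagnosis that ``the sole purpose of the odd-order hypothesis is to guarantee solvability'' is therefore the precise point of the misunderstanding: oddness of $\pi$, not solvability of the Hall subgroup, is what makes the conjugacy hold. Replacing your false lemma by the citation of \cite{Gross} repairs the argument, after which it coincides with the paper's proof.
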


\begin{proof}
Since $G$ is transitive on the vertices of $\Gamma$ we have $A=GA_\alpha$.
Assume that $\gcd(|G|,|A_\alpha|)=1$.
Then $G$ is a Hall $\pi$-subgroup of $A$, where $\pi$ is the set of the prime divisors of $|G|$.
Note that $\pi$ is a set of odd primes as $|G|$ is odd.
Then for any $\sigma\in\Sym(V)$ with $G^\sigma\leqslant A$, one deduces from \cite[Theorem~A]{Gross}
that $G$ and $G^\sigma$ are conjugate in $A$ as they are Hall $\pi$-subgroups of $A$.
Hence according to Theorem~\ref{thm2}, $\Gamma$ is a GI-graph of $G$.

Now assume that $\Gamma$ is connected of valency less than $p$.
It suffices to prove that $\gcd(|G|,|A_\alpha|)=1$.
Suppose for a contradiction that there exists a prime number $r$ dividing $\gcd(|G|,|A_\alpha|)$
and that $R$ is a Sylow $r$-subgroup of $A_\alpha$.
Since $\Gamma$ is connected, there exist a neighbor $\beta$ of $\alpha$ and an element $x\in R$
such that $\beta^x\neq\beta$.
It follows that the orbit of $\beta$ under $\langle x\rangle$ has length at least $r$,
contrary to our assumption that the valency of $\Gamma$ is less than $p\leqslant r$.
\end{proof}

Below is a necessary condition for GI-graphs.

\begin{theorem}\label{Component}
If $\Cos(G,H,HSH)$ is a GI-graph of a group $G$, then for any embedding $\varphi:\langle H,S\rangle\rightarrow G$ such that $H^\varphi=H$,
there exists $\tau\in\Aut(G)$ such that $H^\tau=H$ and $\langle H,S\rangle^\tau=\langle H,S^\varphi\rangle$.
\end{theorem}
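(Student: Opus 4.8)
The plan is to manufacture a second coset graph that is forced to be isomorphic to $\Gamma=\Cos(G,H,HSH)$ and then invoke the defining GI-property directly. Write $K=\langle H,S\rangle$ and $K'=\langle H,S^\varphi\rangle$. Since $\varphi$ is an embedding with $H^\varphi=H$, its restriction is a group isomorphism $K\to K'$ with $K'=K^\varphi$; in particular $|K|=|K'|$ and hence $[G{:}K]=[G{:}K']$. Moreover $S\subseteq K\setminus H$ gives $S^\varphi\subseteq K'\setminus H\subseteq G\setminus H$, so $\Sigma:=\Cos(G,H,HS^\varphi H)$ is a legitimate coset graph of a $G$-vertex-transitive graph. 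The goal is to show $\Gamma\cong\Sigma$.

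The key structural step is to identify the connected components. The component of $\Gamma$ containing the base vertex $H$ is the induced subgraph on $[K{:}H]$, and this subgraph is exactly $\Cos(K,H,HSH)$: the cosets reachable from $H$ are precisely the $Hx$ with $x\in\langle H,S\rangle=K$ (the component-level version of Lemma~\ref{lem1}(c)), while the adjacency rule restricts verbatim. Since $\hat{G}\leqslant\Aut(\Gamma)$ is vertex-transitive and permutes the components, all $[G{:}K]$ components of $\Gamma$ are isomorphic to $\Cos(K,H,HSH)$; the identical reasoning exhibits $\Sigma$ as a disjoint union of $[G{:}K']$ copies of $\Cos(K',H,HS^\varphi H)$.

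Next I would supply the isomorphism between the two building blocks. The assignment $Hx\mapsto Hx^\varphi$ for $x\in K$ is a well-defined bijection $[K{:}H]\to[K'{:}H]$ because $\varphi$ is an isomorphism $K\to K'$ fixing $H$, and it preserves adjacency since $(yx^{-1})^\varphi\in(HSH)^\varphi=HS^\varphi H$; thus $\Cos(K,H,HSH)\cong\Cos(K',H,HS^\varphi H)$. Combining this with the previous paragraph and with $[G{:}K]=[G{:}K']$, both $\Gamma$ and $\Sigma$ are disjoint unions of the same number of pairwise isomorphic connected graphs, so $\Gamma\cong\Sigma$.

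Finally I would apply the hypothesis: as $\Gamma$ is a GI-graph of $G$ and $\Sigma=\Cos(G,H,HS^\varphi H)$ with $S^\varphi\subseteq G\setminus H$ satisfies $\Gamma\cong\Sigma$, the definition yields $\tau\in\Aut(G)$ with $H^\tau=H$ and $HS^\tau H=HS^\varphi H$. A short containment argument then upgrades this double-coset equality to the required subgroup equality: from $S^\tau\subseteq HS^\varphi H\subseteq K'$ and $H=H^\tau\subseteq K^\tau$ one gets $K^\tau=\langle H,S^\tau\rangle\subseteq K'$, and symmetrically $K'=\langle H,S^\varphi\rangle\subseteq K^\tau$ using $S^\varphi\subseteq HS^\tau H$; hence $\langle H,S\rangle^\tau=K^\tau=K'=\langle H,S^\varphi\rangle$. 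The only genuine work is the component-decomposition step proving $\Gamma\cong\Sigma$; once that is secured the GI-property does the rest, so I expect the main (though still routine) obstacle to be verifying carefully that both graphs split into equal numbers of mutually isomorphic components.
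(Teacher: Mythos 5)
Your proposal is correct and follows essentially the same route as the paper's proof: decompose both $\Cos(G,H,HSH)$ and $\Cos(G,H,HS^\varphi H)$ into components isomorphic to $\Cos(\langle H,S\rangle,H,HSH)$ and $\Cos(\langle H,S^\varphi\rangle,H,HS^\varphi H)$ respectively, use $\varphi$ to identify these components and conclude $\Gamma\cong\Sigma$, then invoke the GI-definition and convert the double-coset equality $HS^\tau H=HS^\varphi H$ into the subgroup equality $\langle H,S\rangle^\tau=\langle H,S^\varphi\rangle$. Your version merely spells out two details the paper leaves implicit (the equality of the component counts $[G{:}\langle H,S\rangle]=[G{:}\langle H,S^\varphi\rangle]$, and the two-sided containment replacing the paper's one-line computation $\langle H,HS^\tau H\rangle=\langle H,HS^\varphi H\rangle$), which is fine.
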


\begin{proof}
Note that $\Cos(\langle H,S\rangle,H,HSH)$ and $\Cos(\langle H,S^\varphi\rangle,H,HS^\varphi H)$
are connected components of $\Cos(G,H,HSH)$ and $\Cos(G,H,HS^\varphi H)$, respectively. Then
$$
\Cos(G,H,HSH)\cong\Cos(G,H,HS^\varphi H)
$$
if and only if $\Cos(\langle H,S\rangle,H,HSH)\cong\Cos(\langle H,S^\varphi\rangle,H,HS^\varphi H)$.
As $\varphi$ induces an graph isomorphism from $\Cos(\langle H,S\rangle,H,HSH)$ to
$\Cos(\langle H,S^\varphi\rangle,H,HS^\varphi H)$, we thus have an isomorphism $\Cos(G,H,HSH)\cong\Cos(G,H,HS^\varphi H)$.
Since $\Cos(G,H,HSH)$ is a GI-graph of $G$, there exists $\tau\in\Aut(G)$ such that $H^\tau=H$ and $HS^\tau H=HS^\varphi H$.
Consequently,
$$
\langle H,S\rangle^\tau=\langle H,HSH\rangle^\tau=\langle H,HS^\tau H\rangle=\langle H,HS^\varphi H\rangle=\langle H,S^\varphi\rangle,
$$
which completes the proof.
\end{proof}

\section{Examples}\label{sec2}

First of all, the complete graphs and their complements are GI-graphs.
We regard them as \emph{trivial} GI-graphs.
An observation of \cite{LPX} says that every finite group of order greater than two has non-trivial CI-graphs.
Thus we know that every finite group of order greater than two has non-trivial GI-graphs.
Given a finite group $G$ of odd order, recall that as Theorem~\ref{thm1} asserts, every $G$-vertex-transitive graph $\Gamma$ of valency less than the smallest prime divisor of $|G|$ is a GI-graph of $G$.
This provides us with more examples of GI-graphs.

A \emph{$2$-arc} of a graph $\Gamma$ is a triple $(\alpha,\beta,\gamma)$ of pairwise distinct vertices of $\Gamma$ such that $\alpha\sim\beta$ and $\beta\sim\gamma$. A graph is said to be \emph{$(G,2)$-arc-transitive} for some $G\leqslant\Aut(\Gamma)$ if $G$ acts transitively on the set of $2$-arcs. Recall that the \emph{socle} of a group $G$ is the product of all its minimal normal subgroups, denoted by $\Soc(G)$. We call a group \emph{almost simple} if its socle is nonabelian simple. It is readily seen that the almost simple groups with a given socle $T$ are precisely those groups $G$ satisfying $T\leqslant G\leqslant\Aut(T)$, whence $G/T$ is a solvable group by the well-known Schreier conjecture. The next example follows from \cite[Theorem~1.3]{Fang} and the criteria in Theorem~\ref{thm2}.

\begin{example}
Let $G$ be an almost simple group with socle $\Sz(2^{2n+1})$ or $G=\Ree(3^{2n+1})$.
Then every connected undirected $(G,2)$-arc transitive graph is a GI-graph of $G$.
\end{example}

Utilizing Theorem~\ref{Component}, we are able to construct some disconnected non-GI-graphs.

\begin{example}
Let $m$ and $n$ be integers such that $m\geqslant2$ and $n\geqslant2m+6$. Take $G=\A_n$, $a=(5,6)(7,8,\dots,2m+5,2m+6)\in G$, $b=(1,2)(3,4)a\in G$, $H=\langle a^2\rangle=\langle b^2\rangle$, $S=\{a,a^3,\dots,a^{2m-3},a^{2m-1}\}$ and $\varphi:a^i\mapsto b^i$ for any $i\in\mathbb{Z}$. Then $\varphi$ is an embedding of $\langle H,S\rangle$ into $G$ such that $H^\varphi=H$ and $\langle H,S^\varphi\rangle=\langle b\rangle$. Apparently, there does not exist $\tau\in\Aut(G)$ such that $\langle H,S\rangle^\tau=\langle a\rangle^\tau=\langle b\rangle$. Hence by Theorem~\ref{Component}, the coset graph $\Cos(G,H,HSH)$ is non-GI.
\end{example}

We close this section with the construction of a connected symmetric non-Cayley non-GI-graph, which proves Theorem~\ref{thm3}.

\begin{example}\label{exam1}
Let $X=\PSL_4(3)$ acting naturally on the set $\Omega$ of one-dimensional subspaces of $\mathbb{F}_3^4$, a four-dimensional vector space over $\mathbb{F}_3$. Take $\alpha\in\Omega$, and $G=\PSiU_4(2)=\PSp_4(3){:}\Z_2$ to be a maximal subgroup of $X$. There exists an involution $g\in G$ such that $\langle G_\alpha,g\rangle=G$ and $|G_\alpha|/|G_\alpha^g\cap G_\alpha|=12$. Let $\Gamma=\Cos(G,G_\alpha,G_\alpha gG_\alpha)$. Then $\Gamma$ is a connected $G$-symmetric and $G$-vertex-primitive graph of order $|\Omega|=40$ and valency $12$. Moreover, $G$ has two conjugacy classes of subgroups isomorphic to $\Sy_6$, fused in $X$, and the groups in both conjugacy classes are transitive on $\Omega$. Take $P$ to be a group in one of these two conjugacy classes, and $Q$ be a group in the other. Since $P$ and $Q$ are conjugate in $X$, they are permutation isomorphic. We claim that $\Gamma$ is a non-Cayley non-GI-graph of $P$.

In fact, the conclusion that $\Gamma$ is not a Cayley graph is obvious as $|P|\neq40$. Denote $Y=\Aut(\Gamma)$. In light of Theorem~\ref{thm2}, it suffices to show that $P$ and $Q$ are not conjugate in $Y$. If $\Soc(Y)=\Soc(G)=\PSU_4(2)$, then $Y=G$ since $G\leqslant Y$ and $G=\Aut(\PSU_4(2))$, which indicates that $P$ and $Q$ are not conjugate in $Y$, as desired. Assume next that $\Soc(G)\neq\Soc(Y)$. Then there exists a subgroup $H$ of $Y$ such that $\Soc(G)\neq\Soc(H)$ and $G$ is maximal in $H$. By~\cite{LPS1987}, either $G$ is maximal in $\A_{40}G$, or $H$ is almost simple with socle $\PSL_4(3)$. For the latter, $H_\alpha=\Z_3^3{:}\PSL_3(3)$ or $\Z_3^3{:}(\PSL_3(3)\times\Z_2)$ since $H$ is primitive on $40$ points, but then $H_\alpha$ does not have a subgroup of index $12$, violating the requirement that $\Gamma$ is $H$-symmetric as $\Gamma$ is $G$-symmetric. Therefore, $G$ is maximal in $\A_{40}G$, and hence $Y\cap\A_{40}G=G$ or $\A_{40}G$. Because $\Gamma$ is not a complete graph, we have $Y\ngeqslant\A_{40}$. It follows that $Y\cap\A_{40}G=G$. If $G\nleqslant\A_{40}$, then $\A_{40}G=\Sy_{40}$ and thus $Y=Y\cap\Sy_{40}=G$, contrary to our assumption that $\Soc(G)\neq\Soc(Y)$. Consequently, $G\leqslant\A_{40}$, and so $G$ has index two in $Y$. Since $\Soc(G)$ is a minimal normal subgroup of $Y$ and $\Soc(G)\neq\Soc(Y)$, we conclude that $Y$ has a minimal normal subgroup other than $\Soc(G)$, say $N$. Viewing that $N\nleqslant G$, we have $N=\Z_2$ and $Y=G\times N$. Hence $P$ and $Q$ are not conjugate in $Y$, proving our claim.
\end{example}

\section{GI-groups}\label{sec3}

A group $G$ is said to be \emph{Hamiltonian} if every subgroup of $G$ is normal. It is obvious that abelian groups are all Hamiltonian, but the converse is not true (for instance, the quaternion group $\Q_8$ is Hamiltonian but not abelian).

\begin{lemma}\label{lem4}
Let $G$ be a Hamiltonian group. Then $G$ is DGI (GI) if and only if $G$ is DCI (CI).
\end{lemma}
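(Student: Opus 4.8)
The forward implications, namely that every DGI-group is a DCI-group and every GI-group is a CI-group, hold for arbitrary finite groups and are already recorded in the introduction; so the plan is to establish the reverse implications under the Hamiltonian hypothesis. The crux is a single structural observation: because $G$ is Hamiltonian, every subgroup of $G$ is normal, and hence the only core-free subgroup of $G$ is the trivial subgroup. Indeed, if $H\leqslant G$ is nontrivial, then $H$ is itself a nontrivial normal subgroup of $G$ contained in $H$, so $H$ fails to be core-free.

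With this in hand, the argument is essentially definitional. First I would recall that every $G$-vertex-transitive graph admits a coset graph representation $\Cos(G,H,HSH)$ with $H$ core-free; the observation above forces $H=1$, so the graph is in fact $\Cos(G,1,S)$, a Cayley graph of $G$. Thus for Hamiltonian $G$ the class of $G$-vertex-transitive graphs literally coincides with the class of Cayley graphs of $G$, and likewise the undirected $G$-vertex-transitive graphs coincide with the undirected Cayley graphs.

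Next I would match up the two isomorphism notions. When $H=1$ the defining condition for $\Gamma=\Cos(G,1,S)$ to be a GI-graph reads: for any $\Sigma=\Cos(G,1,T)$ with $T\subseteq G\setminus\{1\}$ and $\Gamma\cong\Sigma$, there exists $\tau\in\Aut(G)$ with $S^\tau=T$. This is exactly the defining condition for $\Gamma$ to be a CI-graph of $G$. Hence a $G$-vertex-transitive graph is a GI-graph of $G$ precisely when the corresponding Cayley graph is a CI-graph. Combining this with the previous paragraph, $G$ is DGI if and only if every $G$-vertex-transitive graph is a GI-graph, if and only if every Cayley graph of $G$ is a CI-graph, if and only if $G$ is DCI; restricting to undirected graphs throughout yields the GI/CI equivalence in the same way.

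There is no serious obstacle in this proof: the entire content is the reduction to $H=1$, after which the two pairs of definitions are transcriptions of one another. The only points requiring care are to invoke the Hamiltonian property at exactly the place where core-freeness is used, so as to rule out any nontrivial point stabilizer, and to keep the directed and undirected versions in lockstep throughout.
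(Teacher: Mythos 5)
Your proof is correct and follows exactly the paper's argument: the Hamiltonian hypothesis forces every core-free subgroup to be trivial, so every coset graph of $G$ is a Cayley graph and the GI/CI (DGI/DCI) notions coincide definitionally. The paper states this in three sentences; your version merely makes the same reduction explicit, including the routine check that the $H=1$ case of the GI-graph definition is the CI-graph definition.
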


\begin{proof}
For any coset graph $\Cos(G,H,HSH)$ of $G$, the condition that $H$ is core-free in $G$ forces $H=1$ since $G$ is Hamiltonian. This means that each coset graph of $G$ is a Cayley graph of $G$. Hence the concepts of DGI (GI) and DCI (CI) coincide.
\end{proof}

Lemma~\ref{lem4} immediately shows up some DGI-groups (GI-groups) from the list of DCI-groups (CI-groups). For example, since the groups $\Z_k$, $\Z_{2k}$ and $\Z_{4k}$, where $k$ is odd square-free, are Hamiltonian and DCI \cite{Muzychuck95,Muzychuck97} simultaneously, we know that they are DGI-groups.

\begin{theorem}
$\D_{2p}$ is a DGI-group for any odd prime $p$.
\end{theorem}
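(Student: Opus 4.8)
The plan is to enumerate the core-free subgroups of $G=\D_{2p}$ and dispose of the resulting families of coset graphs separately, using the criterion of Theorem~\ref{thm2}. Writing $G=\langle r,t\mid r^p=t^2=1,\ trt=r^{-1}\rangle$, the only proper nontrivial normal subgroup is $\langle r\rangle\cong\Z_p$, so the core-free subgroups are the trivial subgroup together with the $p$ reflection subgroups of order~$2$. Since $p$ is odd all reflections are conjugate in $G$, so the reflection subgroups form a single $\Aut(G)$-orbit; as the GI property is preserved when one applies an automorphism of $G$ to the pair $(H,S)$ (which follows from the isomorphism $\Cos(G,H,HSH)\cong\Cos(G,H^\tau,H^\tau S^\tau H^\tau)$ noted before the definition), it suffices to verify the GI property for the two representatives $H=1$ and $H=\langle t\rangle$. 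For $H=1$ the graph $\Cos(G,1,S)$ is an arbitrary Cayley (di)graph of $G$, so this case is precisely the statement that $\D_{2p}$ is a DCI-group, which I would settle by invoking the known fact that the groups of order $2p$ are DCI-groups (see \cite{CI-survey}).

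The substantive case is $H=\langle t\rangle$. Here the vertex set $[G{:}H]$ has $p$ elements, and since $\langle r\rangle\cap H=1$ while $|\langle r\rangle|=p$, the subgroup $\langle r\rangle$ acts regularly on $[G{:}H]$; identifying the coset $Hr^i$ with $i\in\Z_p$, the generator $r$ acts as $i\mapsto i+1$ and $t$ as $i\mapsto-i$. Thus $\Gamma$ is a circulant (di)graph on $\Z_p$ whose connection set is invariant under negation, because $\hat t\in\Aut(\Gamma)$ acts as $i\mapsto-i$; in particular $\Gamma$ is automatically undirected. If $\Gamma$ is complete or empty it is a GI-graph by the remark opening Section~\ref{sec2}. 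Otherwise $A:=\Aut(\Gamma)$ is a transitive permutation group of prime degree $p$ which is not $2$-transitive, so Burnside's theorem on groups of prime degree yields $A\leqslant\AGL_1(p)=\Z_p\rtimes\Z_{p-1}$.

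I would then observe that $\AGL_1(p)$ has a unique subgroup of order $2p$: any such subgroup must contain the unique (normal) Sylow $p$-subgroup $\Z_p$, and its image in $\Z_{p-1}$ is the unique subgroup of order $2$ of that cyclic group, so the subgroup equals $\Z_p\rtimes\Z_2\cong\D_{2p}$. Consequently $\hat G$ is the only subgroup of $A$ of order $2p$, hence a fortiori the only subgroup of $A$ permutation isomorphic to $\hat G$, and such subgroups are trivially all conjugate in $A$; Theorem~\ref{thm2} then shows $\Gamma$ is a GI-graph of $G$. Combining the two cases proves that every $G$-vertex-transitive graph is a GI-graph, i.e.\ $\D_{2p}$ is DGI. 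I expect the only genuine obstacle to lie in the Cayley case: it has degree $2p$ rather than a prime, so the clean prime-degree Burnside argument is unavailable there and one must rely on the external DCI-classification for order-$2p$ groups, whereas everything in the case $H=\langle t\rangle$ is routine once the reduction to the affine group $\AGL_1(p)$ in degree $p$ has been made.
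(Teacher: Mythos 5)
Your proof is correct, and in the essential case it takes a genuinely different route from the paper. Both arguments share the same skeleton: the core-free subgroups of $\D_{2p}$ are $1$ and the order-$2$ reflection subgroups, the case $H=1$ is outsourced to the known fact that $\D_{2p}$ is a DCI-group (the paper cites Babai directly), and the substantive case is the degree-$p$ coset graph. But there the methods diverge. The paper never bounds $\Aut(\Gamma)$ at all: given a permutation-isomorphic copy $X=\varphi^{-1}\hat{G}\varphi\leqslant\Aut(\Gamma)$, it notes that the Sylow $p$-subgroups of $X$ and $\hat{G}$ are Sylow $p$-subgroups of $\Aut(\Gamma)$ (since the degree is $p$), conjugates them inside $\Aut(\Gamma)$ by Sylow's theorem, and then transfers this conjugation from $\hat{N}=\Z_p$ to $\hat{G}$ via the containment $\Nor_{\Sym(V)}(\hat{N})\leqslant\Nor_{\Sym(V)}(\hat{G})$. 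You instead pin down the ambient group: after dispatching complete and empty graphs, Burnside's prime-degree theorem forces $\Aut(\Gamma)\leqslant\AGL_1(p)$, and uniqueness of the order-$2p$ subgroup of $\AGL_1(p)$ makes $\hat{G}$ literally the \emph{only} candidate, so conjugacy in Theorem~\ref{thm2} is vacuous. Note that both proofs ultimately rest on the same structural fact — a subgroup of $\AGL_1(p)$ containing the socle $\Z_p$ is normal (equivalently, unique for its order) — but the paper's Sylow argument is self-contained and avoids invoking Burnside's theorem, while your version yields the stronger conclusion of uniqueness at the cost of the extra step ruling out $2$-transitivity. Your side observations (the automatic undirectedness of these coset graphs, and the reduction to a single representative reflection subgroup) are correct but not needed by either argument.
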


\begin{proof}
Let $G=\D_{2p}$, $N$ be the Sylow $p$-subgroup of $G$, and $\Gamma=\Cos(G,H,HSH)$ be a coset graph of $G$ with vertex set $V=[G{:}H]$, where $H$ is a core-free subgroup of $G$ and $S\subseteq G\setminus H$. If $H=1$, then $\Gamma$ is a DGI-graph of $G$ by~\cite{Babai}. Hence we assume that $H\neq1$. As $H$ is core-free in $G$, we conclude that $H=\Z_2$ and $|V|=|G|/|H|=p$. Let $X$ be a subgroup of $\Aut(\Gamma)$ such that $X=\varphi^{-1}\hat{G}\varphi$ for some $\varphi\in\Sym(V)$, and $Y$ be a Sylow $p$-subgroup of $X$. Then $Y=\Z_p$, and by the Sylow theorem, there exists $\tau\in\Aut(\Gamma)$ such that $Y=\tau^{-1}\hat{N}\tau$. It derives from $X=\varphi^{-1}\hat{G}\varphi$ that $Y=\varphi^{-1}\hat{N}\varphi$. Thereby we obtain $\varphi^{-1}\hat{N}\varphi=\tau^{-1}\hat{N}\tau$, or equivalently, $\varphi\tau^{-1}\in\Nor_{\Sym(V)}(\hat{N})$. Note that $\Nor_{\Sym(V)}(\hat{N})\leqslant\Nor_{\Sym(V)}(\hat{G})$. This leads to $\varphi\tau^{-1}\in\Nor_{\Sym(V)}(\hat{G})$ and thus
$$
X=\varphi^{-1}\hat{G}\varphi=\tau^{-1}(\varphi\tau^{-1})^{-1}\hat{G}(\varphi\tau^{-1})\tau=\tau^{-1}\hat{G}\tau.
$$
Now appealing Theorem~\ref{thm2} we know that $\Gamma$ is a DGI-graph of $G$, which proves the lemma.
\end{proof}

We close this section with a theorem stating that being DGI-groups (GI-groups) is inherited by subgroups.

\begin{theorem}
If $G$ is a DGI-group (GI-group), then any subgroup $H$ of $G$ is a DGI-group (GI-group).
\end{theorem}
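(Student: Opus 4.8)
The plan is to show that the DGI (GI) property passes to subgroups by realizing every coset graph of a subgroup $H$ as a coset graph of the ambient group $G$, and then transferring the isomorphism condition. First I would fix a core-free subgroup $K$ of $H$ and a subset $S\subseteq H\setminus K$, and consider the coset graph $\Cos(H,K,KSK)$ with vertex set $[H{:}K]$. The natural idea is to induce this up to $G$: take the disjoint union of the right cosets of $H$ in $G$ and form $\Cos(G,K,KSK)$, whose connected component containing $K$ is isomorphic to $\Cos(H,K,KSK)$ by the component argument already used in the proof of Theorem~\ref{Component}. The key subtlety is that $K$ need not be core-free in $G$, so $\Cos(G,K,KSK)$ is strictly speaking not a coset graph in our sense; I would handle this by passing to a suitable quotient or, more cleanly, by working directly with the faithful action on a union of components rather than insisting on a single ambient coset representation.

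The cleaner route, which I expect to pursue, is to argue via Theorem~\ref{thm2} directly at the level of permutation groups. Suppose $\Cos(H,K,KSK)\cong\Cos(H,K,KTK)$ for some $T\subseteq H\setminus K$. I would form the induced graphs on $[G{:}K]$ and observe that an isomorphism between the two $H$-coset graphs lifts to an isomorphism between the two induced $G$-graphs, by acting diagonally on the copies indexed by $[G{:}H]$ and using that $G$ permutes these copies via its regular-like action. Then the DGI (GI) hypothesis on $G$ supplies an automorphism $\tau\in\Aut(G)$ fixing $K$ setwise (or fixing the relevant point stabilizer) and mapping the connection set of one induced graph to that of the other. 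The final step is to restrict $\tau$ back to $H$: one shows $\tau$ can be chosen to normalize $H$ (since $H$ is the setwise stabilizer of the block containing $K$, which is canonically determined), so that $\tau|_H\in\Aut(H)$ witnesses the required $K^{\tau}=K$ and $KS^{\tau}K=KTK$.

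The main obstacle, and the step I would spend the most care on, is exactly this descent of $\tau$ from $\Aut(G)$ to $\Aut(H)$. There is no reason a priori that an arbitrary $\tau\in\Aut(G)$ fixing $K$ should map $H$ to itself, and the isomorphism furnished by the GI-property of $G$ lives on the induced graph over all of $[G{:}K]$, not just over the block corresponding to $H$. I would resolve this by choosing the induced construction so that $H$ is recovered intrinsically from the graph structure --- for instance, as the stabilizer in $G$ of the connected component through $K$, or as the setwise stabilizer of a canonical $G$-invariant partition into copies --- so that any graph automorphism, and in particular the one induced by $\tau$, automatically preserves $H$. Once $H$ is pinned down as such a canonical object, the restriction $\tau|_H$ is forced to be an automorphism of $H$ with the desired properties, and the directed versus undirected distinction (for the DGI versus GI statement) is inherited verbatim since the induced construction preserves undirectedness by Lemma~\ref{lem1}(a).
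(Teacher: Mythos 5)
Your overall strategy (induce the coset graph of $H$ up to $G$, apply the DGI/GI hypothesis of $G$, then descend the resulting automorphism back to $H$) is the same as the paper's, but two things need correcting. A minor one first: the ``key subtlety'' you flag does not exist. If $K$ is core-free in $H$, it is automatically core-free in $G$, since any normal subgroup of $G$ contained in $K$ is in particular a normal subgroup of $H$ contained in $K$, hence trivial. So $\Cos(G,K,KSK)$ is a perfectly legitimate coset graph of $G$, and no quotient or ad hoc replacement of the induced construction is needed. Your lifting of the isomorphism to the induced graphs (disjoint unions of $|G|/|H|$ copies) is fine as stated.

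The genuine gap is in the descent step, exactly where you anticipated trouble---but your proposed fix fails. Recovering $H$ ``as the stabilizer in $G$ of the connected component through $K$'' works only when $\langle K,S\rangle=H$: the setwise stabilizer of that component under right multiplication is $\langle K,S\rangle$, not $H$. When $\Gamma=\Cos(H,K,KSK)$ is disconnected, $H$ is not determined by the graph $\Cos(G,K,KSK)$ at all: that graph is just $|G|/|\langle K,S\rangle|$ pairwise isomorphic components, and the grouping of these components into ``copies of $\Gamma$'' (i.e., into cosets of $H$) is in no way canonical---demanding that the permutation induced by $\tau$ preserve this partition presupposes $H^\tau=H$, which is precisely what you are trying to prove. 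So your argument establishes the theorem only for connected coset graphs of $H$, and the disconnected case is not a technicality: the paper's disconnected $\A_n$-example shows that non-GI behaviour arises exactly because an isomorphism $\langle K,S\rangle\to\langle K,T\rangle$ fixing $K$ need not extend to an automorphism of the ambient group. The paper closes this gap with a complementation trick: if $\langle K,S\rangle\neq H$, then $|K\cup S|\leqslant|H|/2$, so $|K\cup(H\setminus S)|>|H|/2$ and hence $\langle K,H\setminus S\rangle=H$; thus the complement graphs $\overline{\Gamma}$ and $\overline{\Sigma}$ are connected, the connected-case argument applied to them yields $\tau\in\Aut(G)$ with $K^\tau=K$, $H^\tau=H$ and $K\overline{S}^{\tau}K=K\overline{T}K$, and then $KS^\tau K=(H\setminus K)\setminus(K\overline{S}^{\tau}K)=(H\setminus K)\setminus(K\overline{T}K)=KTK$. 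Without this (or some substitute handling $\langle K,S\rangle\neq H$), your proof is incomplete.
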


\proof
Suppose that $G$ is a DGI-group (GI-group). Let $\Gamma=\Cos(H,K,KSK)$ and $\Sigma=\Cos(H,K,KTK)$ be two isomorphic (undirected) coset graphs of $H$, where $K$ is a core-free subgroup of $H$ and $S,T$ are subsets of $H\setminus K$. Clearly, $K$ is also core-free in $G$. Without loss of generality we assume that $S$ and $T$ are both unions of double cosets of $K$.

First assume that $\langle K,S\rangle=H$. Then $\Gamma$ is connected, and so is $\Sigma$ since $\Gamma\cong\Sigma$. Noticing that $\Cos(G,K,KSK)$ and $\Cos(G,K,KTK)$ are $|G|/|H|$ copies of $\Gamma$ and $\Sigma$, respectively, we have $\Cos(G,K,KSK)\cong\Cos(G,K,KTK)$. Then as $G$ is a DGI-group (GI-group), there exists $\tau\in\Aut(G)$ such that $K^\tau=K$ and $KS^\tau K=KTK$. It follows that
$$
H^\tau=\langle K,S\rangle^\tau=\langle K,KSK\rangle^\tau=\langle K^\tau,KS^\tau K\rangle=\langle K,KTK\rangle=\langle K,T\rangle=H.
$$
This shows that $\tau$ induces an automorphism of $H$.

Next assume that $\langle K,S\rangle\neq H$. Then $|K\cup S|\leqslant|H|/2$, and so
$$
|K\cup(H\setminus S)|=|K|+|H\setminus(K\cup S)|>|H\setminus(K\cup S)|\geqslant|H|/2.
$$
Let $\overline{S}=(H\setminus S)\setminus K$ and $\overline{T}=(H\setminus T)\setminus K$. Then $\langle K,\overline{S}\rangle=\langle K,H\setminus S\rangle=H$, which means that the complement graph $\overline\Gamma$ of $\Gamma$ is connected and so is the complement graph $\overline\Sigma$ of $\Sigma$. From $\Gamma\cong\Sigma$ we deduce $\Cos(H,K,K\overline{S}K)=\overline\Gamma\cong\overline\Sigma=\Cos(H,K,K\overline{T}K)$. Hence $\Cos(G,K,K\overline{S}K)\cong\Cos(G,K,K\overline{T}K)$, and there exists $\tau\in\Aut(G)$ such that $K^\tau=K$ and $K\overline{S}^\tau K=K\overline{T}K$ since $G$ is a DGI-group (GI-group). As a consequence,
$$
H^\tau=\langle K,\overline{S}\rangle^\tau=\langle K,K\overline{S}K\rangle^\tau=\langle K^\tau,K\overline{S}^\tau K\rangle=\langle K,K\overline{T}K\rangle=\langle K,\overline{T}\rangle=H,
$$
showing that $\tau$ induces an automorphism of $H$. Moreover,
$$
KS^\tau K=(H\setminus K)\setminus(K\overline{S}^\tau K)=(H\setminus K)\setminus(K\overline{T}K)=KTK.
$$

Thereby we conclude that there always exists $\tau\in\Aut(G)$ such that $K^\tau=K$ and $KS^\tau K=KTK$. This implies that $H$ is a DGI-group (GI-group).
\qed

%

\section{GI-properties of connected $\A_5$-symmetric graphs}\label{sec4}

For a group $G$, the expression $G=HK$ with proper subgroups $H$ and $K$ of $G$ is called a \emph{factorization} of $G$. The lemma below can be read off from~\cite{Xia}.

\begin{lemma}\label{lem2}
If $T=GK$ is a factorization of a simple group $T$ with $G=\A_5$,
then either $(T,K)=(\A_n,\A_{n-1})$ with $n\in\{10,12,15,20,30,60\}$ or $(T,K)$ lies in \emph{Table~\ref{tab1}}.
\end{lemma}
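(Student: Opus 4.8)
The plan is to recast the factorization as a transitivity condition, bound the target $T$ by a small-degree argument, and then read off the finitely many surviving pairs from the classification in \cite{Xia}. First I would observe that $T=GK$ is a factorization exactly when $G=\A_5$ acts transitively on the coset space $[T{:}K]$, the stabilizer of the coset $K$ being $G\cap K$. If $G\cap K=G$ then $G\leqslant K$ and $T=GK=K$, contradicting that $K$ is proper; hence $G\cap K$ is a proper subgroup of $\A_5$, and the degree $d:=|T{:}K|=|G{:}G\cap K|$ is the index of a proper subgroup of $\A_5$. As these indices are precisely $5,6,10,12,15,20,30,60$, we get
$$d\in\{5,6,10,12,15,20,30,60\}.$$
In particular $T$ has a core-free subgroup $K$ of index at most $60$; enlarging $K$ to a maximal subgroup $M$ with $K\leqslant M<T$ cannot increase this index, so $T$ is a finite simple primitive permutation group of degree at most $60$. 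By the classification of the finite simple groups this forces $T$ to be an alternating group $\A_n$ with $n\leqslant60$ or to lie in an explicit finite list of groups of Lie type and sporadic groups.

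Next I would isolate the generic family. When $T=\A_n$ and $K=\A_{n-1}$ is a point stabilizer of the natural action, $\A_n=\A_5\,\A_{n-1}$ holds precisely when $\A_5$ is transitive of degree $n$, that is, when $n$ is one of the degrees listed above. The value $n=5$ is discarded since it forces $G=T$, contradicting the properness of the factors; the value $n=6$ yields the coincidence $\A_6=\A_5\,\A_5$, special to $\A_6$, which is recorded in Table~\ref{tab1}. This leaves exactly $n\in\{10,12,15,20,30,60\}$. Every remaining possibility — the groups $T$ of Lie type and sporadic type that survive the degree bound, together with the alternating targets whose second factor is not a point stabilizer — contributes only finitely many pairs $(T,K)$, which are collected in Table~\ref{tab1}.

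The substance of the argument, and the step I expect to be the main obstacle, is the case-by-case verification across the bounded list of candidate groups: for each such $T$ and each conjugacy class of subgroups isomorphic to $\A_5$, one must decide whether a complementary factor $K$ of the requisite index exists and, if so, pin down $K$ up to conjugacy. This is exactly the content of the classification of factorizations of almost simple groups with a factor $\A_5$ established in \cite{Xia}; I would specialize that analysis to $G=\A_5$, discard the targets that are not simple, and match the outcome against the two forms claimed in the statement.
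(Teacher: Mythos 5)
Your proposal is correct and ultimately rests on the same foundation as the paper: the paper offers no argument beyond the single sentence that the lemma ``can be read off from~\cite{Xia}'', and your case-by-case verification is likewise delegated to that classification. Your preliminary reductions (the reformulation of $T=GK$ as transitivity of $\A_5$ on $[T{:}K]$, the resulting degree bound $d\in\{5,6,10,12,15,20,30,60\}$, and the identification of the generic family $(\A_n,\A_{n-1})$ with the transitive degrees of $\A_5$) are accurate and make explicit the scaffolding the paper leaves implicit, but they do not constitute a genuinely different route.
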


\begin{table}[htbp]
\caption{}\label{tab1}
\centering
\begin{tabular}{|l|l|l|l|}
\hline
row & $T$ & $K$ \\
\hline
1 & $\A_6$ & $\A_4$, $\Sy_4$, $\Z_3^2{:}\Z_4$, $\A_5$ \\
2 & $\A_7$ & $\PSL_2(7)$ \\
3 & $\A_8$ & $\AGL_3(2)$ \\
4 & $\PSL_2(11)$ & $\Z_{11}$, $\Z_{11}{:}\Z_5$ \\
5 & $\PSL_2(19)$ & $\Z_{19}{:}\Z_9$ \\
6 & $\PSL_2(29)$ & $\Z_{29}{:}\Z_7$, $\Z_{29}{:}\Z_{14}$ \\
7 & $\PSL_2(59)$ & $\Z_{59}{:}\Z_{29}$ \\
8 & $\M_{12}$ & $\M_{11}$ \\
\hline
\end{tabular}
\end{table}

The following two theorems are the main results of this section.

\begin{theorem}\label{lem3}
Let $G=\A_5$ and $\Gamma$ be a connected symmetric coset graph of $G$. If $\Aut(\Gamma)$ is almost simple, then $\Gamma$ is a GI-graph of $G$.
\end{theorem}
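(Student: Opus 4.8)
The plan is to verify the criterion of Theorem~\ref{thm2}. Write $A=\Aut(\Gamma)$, $H=G_\alpha$ and $V=[G{:}H]$, and let $T=\Soc(A)$, which is nonabelian simple since $A$ is almost simple. A subgroup of $A$ is permutation isomorphic to $\hat{G}$ precisely when it has the form $\sigma^{-1}\hat{G}\sigma$ for some $\sigma\in\Sym(V)$ and lies in $A$; hence it suffices to show that every such $L$ is conjugate to $\hat{G}$ inside $A$. Note that each such $L$ is a transitive subgroup of $A$ with $L\cong\A_5$ and point stabilizers isomorphic to $H$. The first reduction is to locate $\hat{G}$ and $L$ inside $T$: since $A/T$ is solvable by the Schreier conjecture while $\A_5$ is nonabelian simple, the image in $A/T$ of any subgroup isomorphic to $\A_5$ is trivial, so both $\hat{G}$ and every such $L$ are contained in $T$. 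In particular $T$ is transitive on $V$.

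Next I would exploit the resulting factorization. Transitivity of $\hat{G}\le T$ gives $T=\hat{G}\,T_\alpha$ with $\hat{G}\cong\A_5$. If $\hat{G}=T$ then $A\le\Aut(\A_5)=\Sy_5$, so $A\in\{\A_5,\Sy_5\}$, and any $L\le T=\hat{G}$ with $|L|=|\A_5|$ equals $\hat{G}$; the criterion is then immediate. Otherwise $T=\hat{G}\,T_\alpha$ is a genuine factorization of a simple group with one factor isomorphic to $\A_5$, so Lemma~\ref{lem2} applies and $(T,T_\alpha)$ is either $(\A_n,\A_{n-1})$ with $n\in\{10,12,15,20,30,60\}$ or one of the finitely many pairs recorded in Table~\ref{tab1}. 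This reduces the theorem to a short list of groups $T$. Moreover, since the $\A_5$-conjugacy class of $H$ is determined by its order, the transitive action of $\A_5$ on $[\A_5{:}H]$ is unique up to permutation isomorphism, so in each case $\hat{G}$ and $L$ are transitive copies of $\A_5$ realizing the same action with the prescribed point stabilizer.

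It then remains, case by case, to upgrade the $\Sym(V)$-conjugacy of $\hat{G}$ and $L$ to conjugacy in $A$. For the family $(\A_n,\A_{n-1})$ the $T$-action on $V$ is the natural action on $n$ points, so $A\le\Sym(V)=\Sy_n$ with $\Soc(A)=\A_n$, whence $A\in\{\A_n,\Sy_n\}$; if $A=\Sy_n$ the $\Sym(V)$-conjugacy already lies in $A$, and if $A=\A_n$ I would show the conjugating permutation can be chosen even, which reduces to checking that $\Nor_{\Sy_n}(\hat{G})$ meets the odd permutations. I would verify this for each degree from the cycle type of the outer automorphism of $\A_5$ in the degree-$n$ representation; for instance, in the regular case $n=60$ the automorphism induced by a transposition of $\Sy_5$ fixes $6$ points and is a product of $27$ transpositions, hence is odd, so $\Nor_{\Sy_{60}}(\hat{G})\nleqslant\A_{60}$. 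For the pairs in Table~\ref{tab1}, where $T\in\{\A_6,\A_7,\A_8,\PSL_2(11),\PSL_2(19),\PSL_2(29),\PSL_2(59),\M_{12}\}$, I would read off the conjugacy classes of subgroups isomorphic to $\A_5$ (from the Atlas or a \magma computation) and track how they are permuted by $\Out(T)\cap A$, thereby confirming that the transitive copies with point stabilizer $H$ fuse into a single $A$-class.

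The clean structural inputs are the reduction $\hat{G},L\le T$ via Schreier together with the factorization restriction of Lemma~\ref{lem2}; these alone leave only finitely many groups to examine. The main obstacle is the final conjugacy bookkeeping. Two points demand real care: in the socle-$\A_n$ subcase one must exclude the possibility that all $\Sym(V)$-conjugators are odd (which would split the relevant $\Sym(V)$-class of $\A_5$ into two non-fused $\A_n$-classes and defeat the criterion), and this is exactly the normalizer-parity computation above; and in the Table~\ref{tab1} cases one must rely on explicit subgroup data to follow the $\A_5$-classes under the outer automorphisms contained in $A$. I expect the parity and fusion computations, rather than any single conceptual difficulty, to carry the weight of the proof. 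The arc-transitivity hypothesis itself is used only lightly---via Lemma~\ref{lem1}(d) it constrains $H$ and the valency and so can trim the case list---while the reductions above require merely that $\hat{G}$ be vertex-transitive.
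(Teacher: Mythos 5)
Your first half matches the paper exactly: the Schreier-conjecture argument forcing $\hat{G}$ (and every permutation isomorphic copy $L$) into $T=\Soc(\Aut(\Gamma))$, the factorization $T=\hat{G}T_\alpha$, and the reduction via Lemma~\ref{lem2} to a finite list. The gap is in your endgame. You propose to finish by ``confirming that the transitive copies with point stabilizer $H$ fuse into a single $A$-class'' via Atlas/\magma bookkeeping, but this confirmation is \emph{false} in precisely the cases that carry the difficulty. All entries of Lemma~\ref{lem2} except $(\A_6,\A_4)$, $(\A_6,\Sy_4)$, $(\PSL_2(11),\Z_{11})$ and $(\PSL_2(29),\Z_{29}{:}\Z_7)$ give $2$-transitive actions, hence complete graphs with $\Aut(\Gamma)=\Sym(V)$, where GI is trivial (and your parity worry for $A=\A_n$ is vacuous, since $\A_n$ is never the full automorphism group of a graph). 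But in the two degree-$60$ cases the copies of $\A_5$ inside $T$ are \emph{regular}, so any two of them are automatically permutation isomorphic; and $\PSL_2(11)$ (resp.\ $\PSL_2(29)$) has two conjugacy classes of such $\A_5$'s, fused only in $\PGL_2(11)$ (resp.\ $\PGL_2(29)$). Thus if $\Aut(\Gamma)$ actually equalled $\PSL_2(11)$ or $\PSL_2(29)$, the graph would be genuinely non-GI by Theorem~\ref{thm2}; no amount of fusion bookkeeping inside $A$ can rescue this.

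What is needed instead --- and what the paper's proof actually does --- is to show that these configurations cannot occur as \emph{full} automorphism groups of a connected symmetric graph. For $X=\PSL_2(11)$, $X_\alpha=\Z_{11}$, the paper picks an involution $t\in\Nor_{\PGL_2(11)}(X_\alpha)$ with $X_\alpha g^tX_\alpha=X_\alpha gX_\alpha$ and shows $(HgH)\cap X=X_\alpha gX_\alpha$ for $H=X_\alpha\langle t\rangle$, so that $\Gamma\cong\Cos(\PGL_2(11),H,HgH)$ and hence $\Aut(\Gamma)\geqslant\PGL_2(11)$, a contradiction. For $X=\PSL_2(29)$, $X_\alpha=\Z_{29}{:}\Z_7$, it shows $\Gamma$ has valency $29$ and admits a block system of $30$ blocks of size $2$ with perfect matchings between blocks, producing automorphisms outside $X$ and contradicting simplicity of $X$. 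Both arguments use arc-transitivity essentially (the single double coset $X_\alpha gX_\alpha$, and the valency computation $|X_\alpha|/|X_{\alpha\beta}|$), so your closing remark that the symmetric hypothesis is ``used only lightly'' has it backwards: without it the fusion obstruction above is fatal, and indeed these are exactly the settings where non-GI behaviour of $\A_5$ (as in Li's valency-$29$ graph) lives. Your proposal is missing these two graph-theoretic exclusion arguments, which are the core of the proof.
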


\begin{proof}
Suppose on the contrary that $\Gamma$ is not a GI-graph of $G$. By Theorem~\ref{thm2}, $\Aut(\Gamma)\neq\A_5$ or $\Sy_5$. Let $\alpha$ be a vertex of $\Gamma$, $X=\Aut(\Gamma)$ and $T$ be the socle of $X$. Then $T\neq\A_5$, $X=\hat{G}X_\alpha$ and $\hat{G}\cap T$ is a normal subgroup of $\hat{G}$. It follows that $\hat{G}\cap T=1$ or $\hat{G}$ since $\hat{G}\cong G$ is simple. If $\hat{G}\cap T=1$, then $\A_5=\hat{G}\cong\hat{G}T/T\leqslant X/T$, contrary to Schreier conjecture. Hence $\hat{G}\cap T=\hat{G}$, or equivalently, $\hat{G}\leqslant T$. Thereby we have the factorization $T=\hat{G}T_\alpha$, which is classified in Lemma~\ref{lem2}. If $T$ acts $2$-transitively on $[T{:}T_\alpha]$, then $\Gamma$ is the complete graph on $n$ vertices and $X=\Sy_n$, which implies that $\Gamma$ is a GI-graph of $G$ by Theorem~\ref{thm2}, contrary to our assumption. Consequently, $T$ does not act $2$-transitively on $[T{:}T_\alpha]$, and so we deduce from Lemma~\ref{lem2} that one of the following three cases appears:
\begin{itemize}
\item[(i)] $T=\A_6$ and $T_\alpha=\A_4$ or $\Sy_4$;
\item[(ii)] $T=\PSL_2(11)$ and $T_\alpha=\Z_{11}$;
\item[(iii)] $T=\PSL_2(29)$ and $T_\alpha=\Z_{29}{:}\Z_7$.
\end{itemize}

First suppose that case~(i) appears. As $X$ should have at least two conjugacy classes of subgroups isomorphic to $\A_5$ by Theorem~\ref{thm2}, the only possibilities for $X$ are $\A_6$ and $\Sy_6$. If $X=\A_6$, then $\A_4\leqslant X_\alpha\leqslant\Sy_4$ and hence $X$ has only one conjugacy class of vertex-transitive subgroups isomorphic to $\A_5$, which leads to a contradiction that $\Gamma$ is a GI-graph of $G$ by Theorem~\ref{thm2}. If $X=\Sy_6$, then $\A_4\leqslant X_\alpha\leqslant\Sy_4\times\Sy_2$ and hence $X$ has at most one conjugacy class of vertex-transitive subgroups isomorphic to $\A_5$, again a contradiction.

Next suppose that case~(ii) appears. As $X$ should have at least two conjugacy classes of subgroups isomorphic to $\A_5$ by Theorem~\ref{thm2}, it derives that $X=\PSL_2(11)$ and so $X_\alpha=\Z_{11}$. Since $\Gamma$ is symmetric, there exists $g\in X\setminus X_\alpha$ such that $\Gamma\cong\Cos(X,X_\alpha,X_\alpha gX_\alpha)$. Let $Y=\PGL_2(11)>X$. One can take an involution $t\in\Nor_Y(X_\alpha)$ such that $X_\alpha g^tX_\alpha=X_\alpha gX_\alpha$. Let $H=\langle X_\alpha,t\rangle=X_\alpha\langle t\rangle$, and note $t\not\in X$. Due to $X_\alpha g^tX_\alpha=X_\alpha gX_\alpha$ we have $tgt\in X_\alpha gX_\alpha$. For any $h_1,h_2\in H$, if $h_1gh_2\in X$, then either $h_1,h_2\in X_\alpha$ or $h_1,h_2\notin X_\alpha$. Further, if $h_1,h_2\notin X_\alpha$, then $h_1t,th_2\in X_\alpha$ and so $h_1gh_2=(h_1t)tgt(th_2)\in X_\alpha gX_\alpha$. This shows that $(HgH)\cap X=X_\alpha gX_\alpha$. Then the map
$$
X_\alpha x\mapsto Hx\quad\text{for }x\in X
$$
is a graph isomorphism from $\Cos(X,X_\alpha,X_\alpha gX_\alpha)$ to $\Cos(Y,H,HgH)$. However, this implies that $Y=\PGL_2(11)$ is a group of automorphisms of $\Gamma\cong\Cos(Y,H,HgH)$, contrary to the condition that $\Aut(\Gamma)=X=\PSL_2(11)$.

Finally suppose that case~(iii) appears. As $X$ should have at least two conjugacy classes of subgroups isomorphic to $\A_5$ by Theorem~\ref{thm2}, it derives that $X=\PSL_2(29)$ and so $X_\alpha=\Z_{29}{:}\Z_7$. Thus $\Gamma$ has order $|X|/|X_\alpha|=60$. Take $\beta$ to be a neighbor of $\alpha$ in $\Gamma$. Since $\Gamma$ is $X$-symmetric, $|X_\alpha|/|X_{\alpha\beta}|$ equals the valency of $\Gamma$, which is less than $60$. Hence $X_{\alpha\beta}=\Z_{29}$ or $\Z_7$. If $X_{\alpha\beta}=\Z_{29}$, then $X_{\alpha\beta}$ fixes each neighbor of $\alpha$ since $X_{\alpha\beta}\vartriangleleft X_\alpha$. This will cause a contradiction that $X_{\alpha\beta}=1$ due to the connectivity of $\Gamma$. Consequently, $X_{\alpha\beta}=\Z_7$ and $\Gamma$ is of valency $|X_\alpha|/|X_{\alpha\beta}|=29$. Note that $X$ has a maximal subgroup $K=\Z_{29}{:}\Z_{14}$ containing $X_\alpha$ such that $X$ acts $2$-transitively on $[X{:}K]$. We deduce that $X$ has an imprimitive block system $\calB=\{V_1,V_2,\dots,V_{30}\}$ on the vertex set of $\Gamma$, where $|V_1|=\dots=|V_{30}|=2$, and the quotient graph of $\Gamma$ with respect to the partition $\calB$ is complete. Moreover, denoted by $V_k$ the block in $\calB$ such that $\alpha\in V_k$, the action of $X_\alpha$ on $\calB\setminus\{V_k\}$ is transitive. Therefore, distinct neighbors of $\alpha$ lie in distinct blocks in $\calB$, and so the induced graph $\Gamma[V_i\cup V_j]$ is a perfect matching for any two blocks $V_i,V_j$ in $\calB$. Now we see that interchanging the two vertices in each $V_i$ is an automorphism of $\Gamma$. Then the kernel of $X$ acting on $\calB$ is non-trivial, contrary to the fact that $X=\PSL_2(29)$ is simple.
\end{proof}

\begin{theorem}
Let $G=\A_5$ and $\Gamma$ be a connected symmetric coset graph of $G$. If $\Aut(\Gamma)$ is vertex-primitive, then $\Gamma$ is a GI-graph of $G$.
\end{theorem}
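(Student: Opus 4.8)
My plan is to verify the criterion of Theorem~\ref{thm2}: because $\Gamma$ is a connected symmetric coset graph of $G=\A_5$, it is enough to show that the subgroups of $X:=\Aut(\Gamma)$ permutation isomorphic to $\hat G$ form a single conjugacy class in $X$. Since $\A_5$ is simple, every proper subgroup is core-free, and for each relevant order the subgroups of that order form one conjugacy class; consequently any two transitive $\A_5$-subgroups of $X$ (which necessarily have the common degree $n:=|V|$) are permutation isomorphic, so the task reduces to proving that the transitive $\A_5$-subgroups of $X$ are all conjugate in $X$. Noting that $n=60/|G_\alpha|\in\{5,6,10,12,15,20,30,60\}$ as $G_\alpha$ ranges over the subgroups of $\A_5$, I would first dispose of the case where $X$ is almost simple by citing Theorem~\ref{lem3}; so I may assume that $X$ is primitive but not almost simple.

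To handle the remaining primitive groups I would appeal to the O'Nan--Scott Theorem, exploiting the severe restriction on $n$. Writing $T=\Soc(X)$: the affine type forces $n$ to be a prime power, hence $n=5$ and $X\leqslant\AGL_1(5)$, which is too small to contain $\A_5$; the product-action type forces $n=m^k$ with $m\geqslant5$ and $k\geqslant2$, but no admissible degree is a proper power; and the twisted-wreath, holomorph-compound and compound-diagonal types all require $n\geqslant60^2$. The only surviving non-almost-simple possibility is the simple-diagonal (holomorph) type with two socle factors, which forces $n=60$ and $T=\A_5\times\A_5$ with both minimal normal subgroups regular. In particular $G_\alpha=1$, so $\Gamma$ is a Cayley graph of $\A_5$.

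The heart of the matter is this degree-$60$ case, which I expect to be the main obstacle. Here $\Gamma=\Cay(\A_5,C)$ and $T=L\times R$, where $R=\hat G$ is the given regular copy of $\A_5$ and $L=\C_{\Sym(V)}(R)$ is the complementary regular copy; since $L\leqslant X=\Aut(\Gamma)$, the set $C$ is closed under conjugation, and $C=C^{-1}$ as $\Gamma$ is undirected. I would first determine all transitive $\A_5$-subgroups of $X$: since $X/T$ embeds in $\Out(\A_5\times\A_5)$, a $2$-group, it is solvable, so every $\A_5$-subgroup of $X$ lies in $T$; by Goursat's lemma a subgroup of $T\cong\A_5\times\A_5$ isomorphic to $\A_5$ is either a factor or a diagonal $\{(t,t^\phi):t\in\A_5\}$ with $\phi\in\Aut(\A_5)$, and such a diagonal is transitive only when $\phi$ is fixed-point-free, which never happens for $\A_5$ (each of its automorphisms is induced by conjugation in $\Sy_5$ and so centralises a nontrivial element). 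Thus the only transitive $\A_5$-subgroups of $X$ are $L$ and $R$. The crucial and subtle point is that, although $L$ and $R$ are each normal in $T$, the inversion map $\iota\colon x\mapsto x^{-1}$ preserves the adjacency of $\Gamma$ (because $C$ is conjugation- and inverse-closed) and therefore belongs to the full group $X=\Aut(\Gamma)$; a direct check shows that $\iota$ interchanges $L$ and $R$. Hence $L$ and $R$ are conjugate in $X$, the transitive $\A_5$-subgroups of $X$ form a single conjugacy class, and Theorem~\ref{thm2} shows that $\Gamma$ is a GI-graph of $G$. The routine work lies in the O'Nan--Scott bookkeeping of the second paragraph; the delicate points are that no diagonal subgroup is transitive and that passing to the \emph{full} automorphism group is exactly what supplies the factor-swapping automorphism $\iota$.
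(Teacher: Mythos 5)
Your proof is correct, and it isolates the same key insight as the paper: in the one surviving non-almost-simple primitive case (diagonal type on $60$ points with socle $\A_5\times\A_5$), the inversion map $\iota\colon x\mapsto x^{-1}$ is an automorphism of $\Gamma$ interchanging the two regular copies of $\A_5$. But your route to this case, and the way you exploit the insight, both differ from the paper. The paper argues by contradiction: a non-GI graph forces $\Aut(\Gamma)$ to have at least two conjugacy classes of transitive $\A_5$-subgroups, and the paper then cites the tables of primitive groups of small degree in Dixon--Mortimer, Appendix~B, to conclude $\Aut(\Gamma)=\Hol(\A_5)$ or $\Soc(\Hol(\A_5))$ --- precisely the two diagonal-type groups that do \emph{not} contain the swap; it then shows the swap nevertheless preserves $\Gamma$, a contradiction. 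Your O'Nan--Scott bookkeeping plus Goursat plus the non-existence of fixed-point-free automorphisms of $\A_5$ is more self-contained (no appeal to the tables) and yields a direct proof rather than a contradiction. The second difference is in how the swap is shown to preserve $\Gamma$: the paper writes the arc set as a single double coset $X_\alpha gX_\alpha$ (this is where symmetry enters) and produces diagonal elements $x_1,x_2$ with $tgt=x_1gx_2$, using that every element of $\A_5$ is conjugate to its inverse; your verification via the conjugation-closed connection set $C$ is more elementary and never uses arc-transitivity, so in fact you prove the stronger statement that every connected undirected Cayley graph of $\A_5$ whose automorphism group is vertex-primitive but not almost simple is a GI-graph. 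Two small caveats: (1) the paper's conventions allow digraphs, and its proof covers them, whereas your step ``$C=C^{-1}$ as $\Gamma$ is undirected'' should in the directed case be replaced by the observation that $C$ is a union of conjugacy classes and every element of $\A_5$ is conjugate to its inverse --- the same ambivalence fact the paper uses; (2) ``both minimal normal subgroups regular'' is imprecise when $\Aut(\Gamma)$ contains the swap (then the socle is the unique minimal normal subgroup), though nothing in your argument depends on it, since you only use that the socle equals $L\times R$ with both factors regular.
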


\begin{proof}
Suppose on the contrary that $\Gamma$ is not a GI-graph of $G$. A subgroup of $G$ has order $1$, $2$, $3$, $4$, $5$, $6$, $10$ or $12$, whence the order of $\Gamma$ is $60$, $30$, $20$, $15$, $12$, $10$, $6$ or $5$. In view of Theorem~\ref{lem3} we may assume that $X:=\Aut(\Gamma)$ is not almost simple. Further, Theorem~\ref{thm2} requires $X$ to have at least two conjugacy classes of transitive subgroups isomorphic to $\A_5$. Then by~\cite[Appendix~B]{DM-book}, $X=\Hol(G)$ or $\Soc(\Hol(G))$, where the symbol $\Hol$ denotes the holomorph of a group. Let $N=\Soc(\Hol(G))=G\times G$ and $D$ be the full diagonal subgroup of $N$. Then the vertex set of $\Gamma$ can be viewed as $[N{:}D]$, with the action of $N$ by right multiplication. Moreover, let $t$ be the permutation
$$
D(g_1,g_2)\mapsto D(g_2,g_1)\quad\text{for }(g_1,g_2)\in N
$$
on $[N{:}D]$, $\alpha=D\in[N{:}D]$, $H=\langle X_\alpha,t\rangle$ and $Y=\langle X,t\rangle$. Clearly, $t$ is an involution in $Y\setminus X$. Since $\Gamma$ is symmetric, there exists $g\in X\setminus X_\alpha$ such that $\Gamma\cong\Cos(X,X_\alpha,X_\alpha gX_\alpha)$.

First suppose that $g\in\Soc(\Hol(G))$. Then $g=(g_1,g_2)$ acts on $[N{:}D]$ by right multiplication for some $g_1,g_2\in G$. Take $h_1\in G$ such that $(g_1g_2^{-1})^{h_1}=(g_1g_2^{-1})^{-1}$ and write $h_2=g_1^{-1}h_1^{-1}g_2$. For $i=1,2$ set $x_i$ to be the right multiplication of $(h_i,h_i)$ on $[N{:}D]$. It is routine to verify that $tgt=x_1gx_2\in X_\alpha gX_\alpha$. Hence $(HgH)\cap X=X_\alpha gX_\alpha$, and thus the map
$$
X_\alpha x\mapsto Hx\quad\text{for }x\in X
$$
is a graph isomorphism from $\Cos(X,X_\alpha,X_\alpha gX_\alpha)$ to $\Cos(Y,H,HgH)$. However, this implies that $Y$ is a group of automorphisms of $\Gamma\cong\Cos(Y,H,HgH)$, contrary to the condition that $\Aut(\Gamma)=X<Y$.

Next suppose that $g\in\Hol(G)\setminus\Soc(\Hol(G))$. Then there exists an involution $\tau\in X_\alpha\setminus\Soc(\Hol(G))$ such that $t\tau=\tau t$ and $g\tau^{-1}\in\Soc(\Hol(G))$. In the previous paragraph we see that $t(g\tau^{-1})t\in X_\alpha g\tau^{-1}X_\alpha$. Hence
$$
tgt=tg\tau^{-1}\tau t=t(g\tau^{-1})t\tau\in X_\alpha g\tau^{-1}X_\alpha\tau=X_\alpha gX_\alpha.
$$
Consequently, $(HgH)\cap X=X_\alpha gX_\alpha$, and so the map
$$
X_\alpha x\mapsto Hx\quad\text{for }x\in X
$$
is a graph isomorphism from $\Cos(X,X_\alpha,X_\alpha gX_\alpha)$ to $\Cos(Y,H,HgH)$. However, this implies that $Y$ is a group of automorphisms of $\Gamma\cong\Cos(Y,H,HgH)$, contrary to the condition that $\Aut(\Gamma)=X<Y$.
\end{proof}


\end{document}